\providecommand{\U}[1]{\protect\rule{.1in}{.1in}}
\newtheorem{theorem}{Theorem}
\newtheorem{proposition}[theorem]{Proposition}
\newenvironment{proof}[1][Proof]{\noindent\textbf{#1.} }{\ \rule{0.5em}{0.5em}}
\begin{document}
\title{Interpreting the von Neumann entropy of graph Laplacians, and coentropic graphs}
\author{Niel de Beaudrap}
\affiliation{Department of Applied Mathematics and Theoretical
Physics, University of Cambridge, Wilberforce Road, Cambridge, United Kingdom}
\author{Vittorio Giovannetti}
\affiliation{NEST, Scuola Normale Superiore and Istituto Nanoscienze-CNR, Piazza dei Cavalieri 7, I-56126 Pisa, Italy}
\author{Simone Severini}
\affiliation{Department of Computer Science, and Department of Physics \& Astronomy,
University College London, WC1E 6BT London, United Kingdom}
\author{Richard Wilson}
\affiliation{Department of Computer Science, University of York,
Deramore Lane, Heslington, York, YO10 5GH, United Kingdom}

\begin{abstract}
For any graph, we define a rank-1 operator on a bipartite
tensor product space, with components associated to the set of vertices and
edges respectively. We show that the partial traces of the operator are the
Laplacian and the edge-Laplacian. This provides an interpretation of the von
Neumann entropy of the (normalized)\ Laplacian as the amount of quantum
entanglement between two systems corresponding to vertices and edges. In this
framework, cospectral graphs correspond exactly to local unitarily equivalent pure
states. Finally, we introduce the notion of coentropic graphs, that is, graphs with equal von Neumann entropy.
The smallest coentropic (but not cospectral) graphs that we are able to construct have $8$ vertices.
The number of equivalence classes of coentropic graphs with $n$ vertices and $m$ edges is a lower bound to the number of (pure) bipartite entanglement classes with subsystems of corresponding dimension.

\end{abstract}
\maketitle


Our references on algebraic graph theory and quantum information are
\cite{god} and \cite{nc}, respectively. Let $G=(V,E)$ be a simple graph with
$n$ vertices and $m$ edges. We consider a representation of
graphs using certain vectors in a bipartite tensor product space.
Specifically, we define two configuration (Hilbert) spaces: $\mathcal{H}%
_{V}\cong\mathbb{C}^{V}$ with orthonormal basis $\mathbf{a}_{v}$ running over
$v\in V$; $\mathcal{H}_{E}\cong\mathbb{C}^{E}$ with orthonormal basis
$\mathbf{b}_{e}$ running over $e\in E$. We assume a commutative formal product
from pairs of vertices to unordered pairs, $uv=\left\{  u,v\right\}  $. We
also assume without loss of generality that there are total orders defined on
$V$ and $E$ (which we denote by $\leq$ in both cases).


The graph representation which we consider is a vector in $\mathcal{H}_{V}
\otimes\mathcal{H}_{E}$ related to graph Laplacians. The \emph{Laplacian} of
the graph $G$ is an operator
\begin{equation}
L(G)=D(G)-A(G)
\end{equation}
acting on $\mathcal{H}_{V}$, where $D(G)=\;$Diag$\left(  \deg(v_{1}%
),\deg(v_{2}),\ldots,\deg(v_{n})\right)  $ is the \emph{degree matrix} of $G$
and $A(G)$ is the adjacency matrix.
There is an equivalent definition of a Laplacian, in terms of incidence
matrices. An \emph{orientation} of $G$ is a collection $\mathcal{F}=\left\{
f_{e}\right\}  _{e\in E}$ of bijections $f_{e}:e\longrightarrow\left\{
1,-1\right\}  $ such that that for each $uv\in E$, we have $f_{uv}%
(u)=-f_{uv}(v)\in\left\{  1,-1\right\}  $. For an orientation $\mathcal{F}$ of
$G$, the \emph{incidence matrix} of $G$ is the $n\times m$ matrix
$M_{\mathcal{F}}$ --- with rows indexed by vertices and columns by edges ---
such that
\begin{equation}
\left(  M_{\mathcal{F}}\right)  _{v,e}=%
\begin{cases}
f_{e}(v), & \text{if $v\in e;$}\\
0, & \text{otherwise}.
\end{cases}
\label{eqn:defIncidence}%
\end{equation}
Independently of the chosen orientation $\mathcal{F}$, we then have
\begin{equation}
\label{eqn:laplacianOrientationFormulation}
L(G)=M_{\mathcal{F}}M_{\mathcal{F}}^{\dagger}.
\end{equation}
This shows that $L\left(  G\right)  $ is positive semidefinite. We may
equivalently formulate the incidence matrix as a sum of outer products,
\begin{equation}
M_{\mathcal{F}}=\sum_{e\in E}\sum_{v\in e}\,f_{e}(v)\mathbf{a}_{v}%
\mathbf{b}_{e}^\dagger=\sum_{uv\in E}f_{uv}(u)(\mathbf{a}_{u}-\mathbf{a}%
_{v})\mathbf{b}_{uv}^{\dagger};
\end{equation}
the middle expression is just another presentation of the definition in Eq.
(\ref{eqn:defIncidence}); the right-hand expression follows from
$f_{uv}(u)=-f_{uv}(v)$.

Even though the Laplacian itself is the same for all orientations of $G$,
the formulation of the Laplacian in Eq.~\eqref{eqn:laplacianOrientationFormulation}
is orientation-dependent, essentially because we are not considering the graph $G$
but rather a digraph $D$ such that $A(G)=A(D)+A(D^{T})$.
Considering incidence matrices as a property of \emph{digraphs} motivates the
following definition. The \emph{incidence matrix} of a directed graph $G$ is
the $n\times m$ matrix $\bar{M}$ --- with rows indexed by vertices and columns
by arcs --- such that
\[
\bar{M}_{v,\alpha}=%
\begin{cases}
+1, & \text{if $v$ is a source of $\alpha;$}\\
-1, & \text{if $v$ is a sink of $\alpha;$}\\
0, & \text{otherwise}.
\end{cases}
\]
The matrix $M_{\mathcal{F}}$ is then the resulting \emph{directed} incidence
matrix $\bar{M}$ for the directed graph in which we replace $uv$ with the arc
$u\rightarrow v$ if $f_{uv}(u)=+1$, and with the arc $v\rightarrow u$ if
$f_{uv}(u)=-1$ (\emph{i.e.}, taking $\mathcal{F}$ literally as a specification
of how to uniquely orient the edges of $G$). Having a definition of incidence
matrices on digraphs, we can describe the Laplacian of $G$ in terms of the
incidence matrix of $G$, interpreted as a symmetric digraph containing both
the arc $u \rightarrow v$ and the arc $v \rightarrow u$ for each edge $uv \in
E(G)$. We replace each $\mathbf{b}_{uv}$ with two vectors $\mathbf{d}_{u,v}$
and $\mathbf{d}_{v,u}$ corresponding to the arcs $u\rightarrow v$ and
$v\rightarrow u$. For instance, we may do this by redefining $\mathcal{H}_{E}
\;=\; \text{span }\bigl\{ \mathbf{d}_{u,v} : uv \in E(G) \bigr\} \subseteq
\mathcal{H}_{V} \otimes\mathcal{H}_{V}$, letting $\mathbf{d}_{u,v} =
\mathbf{a}_{u} \otimes\mathbf{a}_{v}$. We thereby obtain
\begin{subequations}
\label{eqn:undirIncidence}
\begin{align}
\bar{M}  :=\sum_{u\in V}\sum_{uv\in E}\mathbf{a}_{u}\left(  \mathbf{d}%
_{u,v}-\mathbf{d}_{v,u}\right)  ^{\dagger} &=\sum_{u\in V}\sum_{uv\in E}\left(
\mathbf{a}_{u}-\mathbf{a}_{v}\right)  \mathbf{d}_{v,u}^{\dagger}%
\label{eqn:directedIncidenceExpansion}\\
&  =\sum_{uv\in E}\left(  \mathbf{a}_{u}-\mathbf{a}_{v}\right)  \left(
\mathbf{d}_{u,v}-\mathbf{d}_{v,u}\right)  ^{\dagger},
\end{align}
\end{subequations}
restricting in this case to graphs $G$ containing no isolated vertices.
We may then easily show that
\[
L(G)=\tfrac{1}{2}\bar{M}\bar{M}^{\dagger}.
\]
(The factor of $1/2$ may be seen to arise from doubling the edge-space by
introduction of arc vectors rather than edge vectors.)
We may thereby describe the Laplacian using incidence matrices, but without
reference to any particular orientation $\mathcal{F}$ of the edges.

Implicitly, the latter formulation of the Laplacian also describes a way in
which it may be formed as the partial trace of a rank-1 operator on
$\mathcal{H}_{V}\otimes\mathcal{H}_{E}$ which is determined by the graph $G$.
The rank-1 operator we may represent as an outer product $\psi_G\psi_G^{\dagger}$,
where $\psi_G\in\mathcal{H}_{V} \otimes\mathcal{H}_{E}$ is a vectorization of
the incidence matrix $\bar M$. Consider the vector
\begin{equation}
\label{eqn:incidenceVector}\psi_G=\tfrac{1}{\sqrt2}\sum_{uv\in
E}\left(  \mathbf{a}_{u}-\mathbf{a}_{v}\right)  \otimes\left(  \mathbf{d}%
_{u,v}-\mathbf{d}_{v,u}\right)  \in\mathcal{H}_{V}\otimes\mathcal{H}_{E};
\end{equation}
With the use of the partial trace operation, and letting $\mathbf{X}_{E(G)}$
be the characteristic function of $E(G)$, we can describe a precise
relationship between $\psi_G$ and the Laplacian:
\begin{equation}
\begin{aligned}[b]
	\text{tr}_{E}\left( \psi_G \psi_G^{\dagger}\right)
&=
	\sum_{u,v\in V} \Bigl( \boldsymbol{1}_{V}\otimes\mathbf{d}_{u,v}^{\dagger}\Bigr) \psi_G \psi_G^{\dagger}\Bigl( \boldsymbol{1}_{V}\otimes\mathbf{d}_{u,v}\Bigr) \label{eqn:nonNormPsiLaplacian}
\\&=
	\tfrac{1}{2}\!\sum_{u,v \in V} \mathbf{X}_{E(G)}(uv) \; (\mathbf{a}_u - \mathbf{a}_v) (\mathbf{a}_u - \mathbf{a}_v)^\dagger
\\&=
	\tfrac{1}{2}\!\sum_{u,v \in V} \mathbf{X}_{E(G)}(uv) \; (\mathbf{a}_u - \mathbf{a}_v) \,\mathbf{d}_{u,v}^\dagger \mathbf{d}_{u,v} \,(\mathbf{a}_u - \mathbf{a}_v)^\dagger	\;;
	\end{aligned}
\end{equation}
	Introducing separate terms involving $\mathbf{d}_{u,v}^\dagger \mathbf{d}_{u,v}$ and $\mathbf{d}_{v,u}^\dagger \mathbf{d}_{v,u}$ for each undirected edge $uv \in E(G)$, we may then obtain
\begin{equation}
	\begin{aligned}[b]
	\text{tr}_{E}\left( \psi_G \psi_G^{\dagger}\right)
	&=
	\tfrac{1}{2}\!\!\!\!\sum_{uv \in E(G)}\!\!\! \left[ (\mathbf{a}_u - \mathbf{a}_v) \,\mathbf{d}_{u,v}^\dagger \mathbf{d}_{u,v} \,(\mathbf{a}_u - \mathbf{a}_v)^\dagger \;+\; (\mathbf{a}_v - \mathbf{a}_u) \,\mathbf{d}_{v,u}^\dagger \mathbf{d}_{v,u} \,(\mathbf{a}_v - \mathbf{a}_u)^\dagger \right]
\\&=
	\tfrac{1}{2}\!\!\!\!\!\sum_{\;\;st,uv \in E(G)}\!\!\!\!\! \left[ (\mathbf{a}_s - \mathbf{a}_t) \otimes (\mathbf{d}_{s,t} - \mathbf{d}_{t,s})^\dagger\right] \left[ (\mathbf{d}_{u,v} - \mathbf{d}_{v,u}) \otimes (\mathbf{a}_u - \mathbf{a}_v)^\dagger\right]
\\[1ex]&=
	\tfrac{1}{2} \bar{M} \bar{M}^\dagger = L(G).
\end{aligned}\mspace{-20mu}
\end{equation}
Note that $\psi_G\in\mathcal{H}_{V} \otimes\mathcal{H}_{E}$, as it has been
defined above, may not be a unit vector; its normalization has been chosen
specifically so that $\| \psi_G \|_{2} = \sqrt{\text{tr}(L(G))} = \sqrt{2|E|}$, which will
differ from $1$. If we wish, we may renormalize it, and retain the relation
$L(G) = \text{tr}_{E}(\psi_G\psi_G^{\dagger})$ by dividing the Laplacian through by $2|E|$ to obtain an operator with unit trace.

Having obtained $L(G)$ as a partial trace of a rank-1 operator, we may ask the
following: what is the result of taking the other partial trace? This is (a
normalized version of) what is known in the literature as the \emph{edge
Laplacian},
\begin{equation}
L_{E}(G) = \text{tr}_{\text{V}} \left( \psi_G\psi_G^{\dagger}\right)  = \tfrac
{1}{2} \bar M^{\dagger}\bar M ,
\end{equation}
where the scalar of proportionality is determined by the normalization of
$\psi_G$; this can be shown by a similar development as in
Eqn.~\eqref{eqn:nonNormPsiLaplacian}. While less studied than the Laplacian, a
recent application of the edge Laplacian is in dynamic systems and the edge
agreement problem (see \cite{me}). The edge-Laplacian has the same positive
eigenvalues as $L(G)$, but as it (usually) acts on a much larger configuration
space (\emph{i.e.}\ when $G$ has more edges than vertices), it will have a
larger kernel, whose dimension is the size of a cycle-basis for $G$.
The above discussion can be summarized as follows:

\begin{proposition}
\label{prop:LaplacianMarginals}
Let $\psi_G$ be an incidence vector of a graph $G=\left(  V,E\right)  $, as
defined in Eqn.~\eqref{eqn:incidenceVector}. Then
\begin{equation}
L\left(  G\right)  =\text{\emph{tr}}_{E}\left(  \psi_G\psi_G^{\dagger}\right)
\qquad\text{and}\qquad L_{E}\left(  G\right)  =\emph{tr}_{V}\left(  \psi_G
\psi_G^{\dagger}\right)  .
\end{equation}

\end{proposition}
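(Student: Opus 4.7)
The approach is to observe that $\psi_G$ is, up to the overall factor $1/\sqrt 2$, a vectorization of the undirected incidence matrix $\bar{M}$ from Eqn.~\eqref{eqn:undirIncidence}, and to combine this with the identity $L(G)=\tfrac{1}{2}\bar M\bar M^\dagger$ already derived in the text. The standard fact that, for any matrix $A$, the partial traces of $\mathrm{vec}(A)\,\mathrm{vec}(A)^\dagger$ yield $AA^\dagger$ on one tensor factor and $A^\dagger A$ on the other then handles both marginals uniformly; it only remains to carry out the two traces explicitly in the chosen basis.

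For the first identity $L(G)=\text{tr}_E(\psi_G\psi_G^\dagger)$, the computation is essentially complete in the display following Eqn.~\eqref{eqn:nonNormPsiLaplacian}: one inserts the resolution of the identity $\sum_{u,v\in V}\mathbf{d}_{u,v}\mathbf{d}_{u,v}^\dagger$ on $\mathcal{H}_E$, uses the orthonormality $\mathbf{d}_{u,v}^\dagger\mathbf{d}_{s,t}=\delta_{us}\delta_{vt}$ together with the characteristic function $\mathbf{X}_{E(G)}$ to keep only those arcs coming from actual edges, and then repackages the resulting sum of outer products $(\mathbf{a}_u-\mathbf{a}_v)(\mathbf{a}_u-\mathbf{a}_v)^\dagger$ as $\tfrac{1}{2}\bar M\bar M^\dagger$. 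For the second identity $L_E(G)=\text{tr}_V(\psi_G\psi_G^\dagger)$, I would run the mirror-image argument, expanding the trace as $\sum_{w\in V}(\mathbf{a}_w^\dagger\otimes\mathbf{1}_E)(\cdot)(\mathbf{a}_w\otimes\mathbf{1}_E)$ and using the orthonormality of the $\mathbf{a}_w$ to collapse the double sum over ordered pairs of edges of $\psi_G$ into the expression $\tfrac{1}{2}\bar M^\dagger\bar M$, which is the definition of $L_E(G)$ used in the excerpt.

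Because the two computations are transposes of one another, I do not anticipate any genuine technical obstruction. The one point that requires care is the bookkeeping of the factor $1/2$: each undirected edge $uv\in E$ enters $\psi_G$ through the antisymmetric combination $\mathbf{d}_{u,v}-\mathbf{d}_{v,u}$, so once one sums over ordered arcs every edge contributes twice; the normalization $1/\sqrt 2$ chosen in Eqn.~\eqref{eqn:incidenceVector} is precisely what then matches the prefactors in both $L(G)=\tfrac{1}{2}\bar M\bar M^\dagger$ and $L_E(G)=\tfrac{1}{2}\bar M^\dagger\bar M$, completing the proof.
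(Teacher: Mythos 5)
Your proposal is correct and follows essentially the same route as the paper: the paper likewise treats $\psi_G$ as $\tfrac{1}{\sqrt2}$ times the vectorization of $\bar M$, computes $\mathrm{tr}_E(\psi_G\psi_G^\dagger)$ explicitly in the arc basis to obtain $\tfrac12\bar M\bar M^\dagger=L(G)$, and disposes of the second marginal by noting it is the mirror-image computation yielding $\tfrac12\bar M^\dagger\bar M=L_E(G)$. Your bookkeeping of the $1/\sqrt2$ normalization against the doubling from arcs is also exactly the point the paper flags.
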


We may interpret the vector $\psi_G$ (or the renormalized version of this vector) as a quantum state vector on two systems of finite dimension, one of dimension at least $|V|$ and one of dimension at least $|E|$, supporting Hilbert spaces which subsume $\mathcal{H}_V$ and $\mathcal{H}_E$ respectively.
Each of these systems may themselves be composed of multiple subsystems, for instance spin-1/2 particles (\emph{i.e.}\ qubits), whose standard basis states are used to represent the indices $v \in V$ and $u,v \in V \times V$ for the basis vectors $\mathbf{a}_v$ and $\mathbf{d}_{u,v}$ respectively.
In the standard terminology of quantum information theory, $\psi_G$ is said to be a \emph{purification} of $L\left(  G\right)  $
and $L_{E}\left(  G\right)  $. These matrices are the \emph{reduced density
matrices} with respect to $\mathcal{H}_{E}$ and $\mathcal{H}_{V}$ --- albeit with the caveat that, as they are usually defined, $L(G)$ and $L_E(G)$ may have trace different from $1$.

On the basis of this observation we may apply the machinery of quantum theory.
The normalized Laplacian $\rho_G = \frac{1}{2|E|} L(G)$ may be then interpreted as a mixture of pure states (\emph{i.e.}\ a convex combination of rank-1 operators) can be given in terms of populations and coherences, by considering how $\psi_G$ may be interpreted as a linear combination of orthogonal vectors.
In the following, we will write $|v\rangle := \mathbf a_v \in \mathcal H_V$ for standard basis states in the vertex space, and $|u,v\rangle := \mathbf d_{u,v} \in \mathcal H_E$ for standard basis states in the edge space.
We may note that the expression for $\psi_G$ in Eq.~\eqref{eqn:incidenceVector} represents a linear combination of states of the form
\begin{equation}
		\psi_{uv}	\;=\;	\tfrac{1}{2} \Bigl[ |u\rangle - |v\rangle \Bigr] \otimes \Bigl[ |u,v\rangle - |v,u\rangle \Bigr]
\end{equation}
over all edges $uv \in E(G)$; by the orthogonality of $\beta_{uv} := |u,v\rangle - |v,u\rangle$ for distinct vertex-pairs $(u,v) \in V \times V$, the density operator $\rho_G = \tfrac{1}{2|E|} L(G)$ is a uniformly random mixture of operators
\begin{equation}
		\rho_G
	=
		\tfrac{1}{E(G)}\!\!\!
		\sum_{uv \in E(G)}\!\!\!
		\alpha_{uv} \alpha_{uv}^\dagger,
	\qquad
	\text{where $\alpha_{uv} = \tfrac{1}{\sqrt 2}|u\rangle - \tfrac{1}{\sqrt 2}|v\rangle$}.
\end{equation}
This suggests an interpretation of $\rho$ as a uniformly random mixture of pure states in the vertex-space, where each state in the mixture corresponds to a single edge of the graph.
The edge-vectors $\alpha_{uv} \in \mathcal H_V$ are not orthogonal vectors to one another when the edges are co-incident, and in that case would not be perfectly distinguishable from one another as quantum states.

For example, let us consider the graph $G=\{\{1,2,3\},\{\{1,2\},\{1,3\}\}\}$.
We denote the standard basis vectors of $\mathcal H_V$ by $|1\rangle$, $|2\rangle$, and $|3\rangle$ corresponding to the vertex labels.
To each edge $uv \in E$ we associate a unit vector
\begin{equation}
	|\{u,v\}\rangle \propto \alpha_{u}^{\{u,v\}}|u\rangle+\alpha_{v}^{\{u,v\}}|v\rangle \in \mathcal H_V	:
\end{equation}
the complex argument of the scalar of proportionality does not matter.
In our example,
\begin{equation}
|\{1,2\}\rangle=\alpha_{1}^{\{1,2\}}|1\rangle+\alpha_{2}^{\{1,2\}}|2\rangle
\qquad
\text{and}
\qquad
|\{1,3\}\rangle=\alpha_{1}^{\{1,3\}}|1\rangle+\alpha_{3}%
^{\{1,3\}}|3\rangle.
\end{equation}
By definition,%
\begin{equation}
|\alpha_{1}^{\{1,2\}}|^{2}+|\alpha_{2}^{\{1,2\}}|^{2}=|\alpha_{1}%
^{\{1,3\}}|^{2}+|\alpha_{2}^{\{1,3\}}|^{2}=1.
\end{equation}
A general state $\rho$ of the system expressing a statistical mixture of $|\{1,2\}\rangle$ and $|\{1,3\}\rangle$ is described by an operator
\begin{equation}
\rho = \omega_{\{1,2\}} |\{1,2\}\rangle\langle\{1,2\}|+\omega_{\{1,3\}} |\{1,3\}\rangle\langle\{1,3\}|,
\end{equation}
where $\omega_{\{1,2\}},\omega_{\{1,3\}} \ge 0$ and $\omega_{\{1,2\}} + \omega_{\{1,3\}} = 1$.
The operator $\rho$ can the be written as%
\begin{equation}
\rho=\left(
\begin{array}
[c]{ccc}%
\rho_{1,1} & \rho_{1,2} & \rho_{1,3}\\
\rho_{2,1} & \rho_{2,2} & \rho_{2,3}\\
\rho_{3,1} & \rho_{3,2} & \rho_{3,3}%
\end{array}
\right)  ,
\end{equation}
with%
\begin{align*}
	\rho_{1,1}
&=
	\omega_{\{1,2\}} \left|\alpha_{1}^{\{1,2\}}\right|\big.^{\!2}+\omega_{\{1,3\}}\left|\alpha_{1}^{\{1,3\}}\right|\big.^{\!2},
\\[1ex]
\rho_{1,2} &=\rho_{2,1}=\omega_{\{1,2\}}\alpha_{1}^{\{1,2\}}\overline
{\alpha}_{2}^{\{1,2\}},\\[1ex]
\rho_{1,3}  &  =\rho_{3,1}=\omega_{\{1,3\}}\alpha_{1}^{\{1,3\}}\overline
{\alpha}_{2}^{\{1,3\}},\\[1ex]
\rho_{2,2}  &  =\omega_{\{1,2\}}\left|\alpha_{2}^{\{1,2\}}\right|\big.^{\!2},\\[1ex]
\rho_{2,3}  &  =\rho_{3,2}=0,\\[1ex]
\rho_{3,3}  &  =\omega_{\{1,3\}}\left|\alpha_{2}^{\{1,3\}}\right|\big.^{\!2}.
\end{align*}
One conventionally interprets such an operator statistically with respect to a projective measurement process, where for some orthonormal basis $\{ \mathbf v_j : j = 1, \ldots, n \}$ the probability of outcome $j$ is $\mathbf v_j^\dagger \rho \mathbf v_j$, representing a realization of $\mathbf v_j$ as the state of the system.
For instance, setting $\mathbf v_j = |j\rangle$ represents a measurement of $\rho$ in the standard basis, and represents the population of the system which is in the state $|j\rangle$ (representing the vertex $j$ in this case) for a system initialized to the state $\rho$.
(Similarly, when measuring the state with a projective measurement with respect to the standard basis, $|\alpha_{1}^{\{1,2\}}|^{2}$ represents the probability of observing the state $|1\rangle$ in the ray $|\{1,2\}\rangle$ --- represented by the operator $|\{1,2\}\rangle\langle\{1,2\}|$ which is involved in the ensemble $\rho$.)

Therefore, $\rho_{i,i}$, with $i=1,2,3$, is the probability of the state
$|i\rangle$ in $\rho$. In other terms, if the same measurement is carried out
$N$ times (under the same initial conditions), $N\rho_{i,i}$ systems will be
observed in the state $|i\rangle$. (For this reason $\rho_{i,i}$ is sometimes said
to be the \emph{population} of $|i\rangle$.) Operationally, each $\rho_{i,i}$ is the
probability of getting the vertex $i$ when \textquotedblleft observing the
graph\textquotedblright, where the graph is itself represented by the state $\rho$.
(It must be remarked that the observation is performed with the respect to the standard basis; projective measurement involving other bases shall give superpositions of
vertices.) The cross terms of $\rho$ indicates the subsistence of a certain
amount of coherence in the system. In fact, $\rho_{i,j}$, with $i\neq j$,
expresses the coherence effects between the states $|i\rangle$ and
$|j\rangle$ arising from the presence of $\alpha_{uv}$ in the statistical mixture.

As we note above, when the mixture is equally weighted, and the states $|\{u,v\}\rangle$ are taken to be the vectors $\alpha_{uv}$, \emph{i.e.}\ uniform linear combinations up to a sign, we then obtain $\rho = \rho_G := \tfrac{1}{2|E|} L(G)$, the normalized Laplacian. Let
\begin{gather*}
\omega_{\{1,2\}}   =\omega_{\{1,2\}}=\tfrac{1}{2},\\[1ex]
\alpha_{1}^{\{1,2\}}   =\tfrac{1}{\sqrt{2}}\quad\text{ and }\quad\alpha_{2}%
^{\{1,2\}}=-\tfrac{1}{\sqrt{2}},\\[1ex]
\alpha_{1}^{\{1,3\}}   =\tfrac{1}{\sqrt{2}}\quad\text{ and }\quad\alpha_{2}%
^{\{1,3\}}=-\tfrac{1}{\sqrt{2}}.
\end{gather*}
Then,%
\begin{align*}
\rho &=
\tfrac{1}{2}|\{1,2\}\rangle\langle\{1,2\}|+\tfrac{1}{2}|\{1,3\}\rangle
\langle\{1,3\}|
=
\left(
\begin{array}
[c]{rrr}%
\frac{1}{2} & -\frac{1}{4} & -\frac{1}{4}\\[0.5ex]
-\frac{1}{4} & \frac{1}{4} & 0\\[0.5ex]
-\frac{1}{4} & 0 & \frac{1}{4}%
\end{array}
\right)
\\
&  =\tfrac{1}{4}\left[ \left(
\begin{array}
[c]{ccc}%
2 & 0 & 0\\
0 & 1 & 0\\
0 & 0 & 1
\end{array}
\right)  -\left(
\begin{array}
[c]{ccc}%
0 & 1 & 1\\
1 & 0 & 1\\
1 & 1 & 0
\end{array}
\right) \right]  =\frac{1}{2\left\vert E(G)\right\vert }\Bigl(
D(G)-A(G)\Bigr).
\end{align*}
Note that we may consider choosing $|\{u,v\}\rangle$ to be uniform superpositions with the same sign,
\begin{equation}
	|\{u,v\} \rangle = \varsigma_{uv}	:= \tfrac{1}{\sqrt 2}\Bigl(|u\rangle + |v\rangle\Bigr) ;
\end{equation}
like $\alpha_{uv}$, a standard basis measurement upon the state $\varsigma_{uv} \in \mathcal H_V$ would yield $u$ and $v$ with equal probability, and all other vertex labels with probability $0$.
The ensemble which arises from a uniform mixture of these unsigned edge-states is then
\begin{equation}
		\varrho_G
	=
		\tfrac{1}{E(G)}\!\!\!
		\sum_{uv \in E(G)}\!\!\!
		\varsigma_{uv} \varsigma_{uv}^\dagger
	=
		\tfrac{1}{2E(G)}\,
		\mathrm{tr}_E\!\left( \phi_G \phi_G^\dagger\right),
\end{equation}
where
\begin{equation}
  \phi_G	\;= \!\!	\sum_{uv \in E(G)} \!\!\varsigma_{uv} \otimes \Bigl( |u,v\rangle + |v,u\rangle \Bigr) \in \mathcal H_V \otimes \mathcal H_E	\;.
\end{equation}
By a similar analysis as that which demonstrates Proposition~\ref{prop:LaplacianMarginals}, we may then show that $\varrho_G$ is the normalized version of the \emph{signless Laplacian}, $L^{+}(G)=D(G)+A(G)$.
The operators $\rho_G$ and $\varrho_G$ are therefore density matrices in the cases where the vertex-states have equal weighting in the pure states $|\{u,v\}\rangle$, and where each of these edge-states have equal weighting in the mixture over edge-states.
Among other reasons, $\rho_G$ is often preferred to $\varrho_G$ because $\rho_G$ has an all-ones eigenvector corresponding to the zero eigenvalue.

\bigskip

By interpreting $\psi_G$ as a pure state, we may apply the ideas of quantum
information in the graph-theoretic framework. We consider below some
directions. The \emph{Schmidt rank} of a vector $\mathbf{v} \in\mathcal{H}%
_{A}\otimes\mathcal{H}_{B}$ where $\mathcal{H}_{A}\cong\mathbb{C}^{A}$ and
$\mathcal{H}_{B}\cong\mathbb{C}^{B}$ ($|A|=n$, $|B|=m$ and $m\geq n$) is
defined as the minimum number of coefficients $\alpha_{i} > 0$ such that
\begin{equation}
\mathbf{v}=\sum_{i=1}^{m}\alpha_{i}(\mathbf{e}_{i}\otimes\mathbf{f}_{i}),
\end{equation}
where $\{\mathbf{e}_{i}:i=1,...,m\}$ and $\{\mathbf{f}_{i}:i=1,...,m\}$ are
some pair of orthonormal bases. As $n \le m$, it follows that the Schmidt rank
is no larger than $n$. The scalars $\alpha_{i}$ are referred to as
\emph{Schmidt coefficients}. It is simple to show that the Schmidt rank of
$\mathbf{v}$, denoted by $\text{rank}_{S}(\psi_G)$, is equal to the
rank of its partial traces; this follows from a direct relationship between
$\mathbf{v}$ and a transformation $V : \mathcal{H}_{B} \to\mathcal{H}_{A}$
defined through its singular value decomposition,
\begin{equation}
V = \sum_{i=1}^{m}\alpha_{i} \; \mathbf{e}_{i} \mathbf{f}_{i}^{\dagger}.
\end{equation}
It is well-known (a consequence of the matrix-tree theorem) that the rank of
the Laplacian of a graph $G$ on $n$ vertices is equal to $n-w(G)$, where
$w(G)$ is the number of connected components of $G$. Directly from the definitions:

\begin{proposition}
Let $\psi_G$ be an incidence vector of a graph $G$ on $n$ vertices. Then, the
Schmidt rank of $\psi_G$ is
\begin{equation}
\textup{rank}_{S}(\psi_G) = \textup{ rank}(L(G))=n-w(G).
\end{equation}
\end{proposition}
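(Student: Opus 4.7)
The plan is to assemble the proposition from two ingredients explicitly recalled in the paragraph immediately preceding its statement: the quantum-information fact that the Schmidt rank of a pure bipartite vector equals the rank of either of its reduced density operators, and the consequence of the matrix-tree theorem that $\mathrm{rank}(L(G)) = n - w(G)$.

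First I would apply Proposition~\ref{prop:LaplacianMarginals}, which identifies $L(G) = \mathrm{tr}_E(\psi_G\psi_G^\dagger)$. Combined with the Schmidt-rank/partial-trace relation, this yields $\mathrm{rank}_S(\psi_G) = \mathrm{rank}(L(G))$ directly. Then I would invoke the cited matrix-tree consequence to rewrite $\mathrm{rank}(L(G)) = n - w(G)$, completing the chain of equalities.

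If one preferred a self-contained justification of the first equality rather than quoting it, one could write $\psi_G$ in its Schmidt decomposition $\psi_G = \sum_{i=1}^{r} \alpha_i(\mathbf{e}_i \otimes \mathbf{f}_i)$ with $\alpha_i > 0$ and the $\mathbf{e}_i$ and $\mathbf{f}_i$ each orthonormal, then compute $\mathrm{tr}_E(\psi_G\psi_G^\dagger) = \sum_i \alpha_i^2 \mathbf{e}_i \mathbf{e}_i^\dagger$, whose rank is the number $r$ of nonzero Schmidt coefficients, i.e.\ $\mathrm{rank}_S(\psi_G)$. This is the transformation $V = \sum_i \alpha_i \mathbf{e}_i \mathbf{f}_i^\dagger$ viewpoint already mentioned in the text.

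There is no substantive obstacle; both ingredients are classical and are flagged as known in the text, so the proof really is a one-line assembly. The only point that deserves a brief remark is that the Schmidt rank is unchanged by the (nonzero) normalization factor $\sqrt{2|E|}$ relating $\psi_G$ to a unit vector, so the identification with $\mathrm{rank}(L(G))$ is independent of whether one works with $\psi_G$ or with its normalization, and likewise whether one works with $L(G)$ or with $\rho_G = \tfrac{1}{2|E|} L(G)$.
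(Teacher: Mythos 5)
Your proposal is correct and matches the paper's own (implicit) argument: the paper states this proposition ``directly from the definitions,'' relying on exactly the two ingredients you assemble --- that the Schmidt rank equals the rank of the partial trace $\mathrm{tr}_E(\psi_G\psi_G^\dagger) = L(G)$, and that $\mathrm{rank}(L(G)) = n - w(G)$ by the matrix-tree theorem. Your optional self-contained verification via the Schmidt decomposition and the remark on normalization are fine but not needed.
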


We say that two vectors $\mathbf{v}, \mathbf{w }\in\mathcal{H}_{A}
\otimes\mathcal{H}_{B}$ are \emph{locally unitarily equivalent} (or
LU-equivalent) if there exist unitary operators $U: \mathcal{H}_{A}
\to\mathcal{H}_{A}$ and $V: \mathcal{H}_{B} \to\mathcal{H}_{B}$ such that
$\mathbf{w }= (U \otimes V) \mathbf{v}$. By considering the Schmidt
decompositions of two such vectors, it is clear that $\mathbf{v}$ and
$\mathbf{w}$ are LU-equivalent if and only if they have the same Schmidt
coefficients.
Let us denote by Sp$(G)$ the \emph{spectrum} of a graph $G$, which we define
as the ordered sequence of eigenvalues of the Laplacian of $G$.

\begin{proposition}
Let $\psi_G$ and $\psi_H$ be incidence vectors of two graphs $G$ and $H$,
respectively, having the same number of vertices and edges. Then $\psi_G$ and
$\psi_H$ are LU-equivalent if and only if $\textup{Sp}(G)=\textup{Sp}(H)$.
\end{proposition}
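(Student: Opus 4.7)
The plan is to reduce the claim to the standard characterization of LU-equivalence of bipartite pure states in terms of Schmidt coefficients, and then to read those coefficients off directly from Proposition~\ref{prop:LaplacianMarginals}.

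First I would recall, as the excerpt has already observed just before the statement, that two vectors $\mathbf{v},\mathbf{w} \in \mathcal{H}_A \otimes \mathcal{H}_B$ are LU-equivalent if and only if they share the same multiset of Schmidt coefficients (the same positive $\alpha_i$'s, counted with multiplicity). Moreover, if $\mathbf{v} = \sum_i \alpha_i (\mathbf{e}_i \otimes \mathbf{f}_i)$ is a Schmidt decomposition, then $\text{tr}_B(\mathbf{v}\mathbf{v}^\dagger) = \sum_i \alpha_i^2 \, \mathbf{e}_i \mathbf{e}_i^\dagger$, so the numbers $\alpha_i^2$ are precisely the nonzero eigenvalues of the partial trace on $\mathcal{H}_A$, with multiplicity. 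Applying Proposition~\ref{prop:LaplacianMarginals} to each graph gives $\text{tr}_E(\psi_G \psi_G^\dagger) = L(G)$ and $\text{tr}_E(\psi_H \psi_H^\dagger) = L(H)$, so the Schmidt coefficients of $\psi_G$ (respectively $\psi_H$) are exactly the square roots of the nonzero eigenvalues of $L(G)$ (respectively $L(H)$), counted with multiplicity. Hence $\psi_G$ and $\psi_H$ are LU-equivalent if and only if $L(G)$ and $L(H)$ have identical multisets of nonzero eigenvalues.

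The remaining step is to upgrade agreement of nonzero eigenvalues to agreement of full spectra. Here I would invoke the hypothesis $|V(G)|=|V(H)|=n$: both Laplacians have exactly $n$ eigenvalues counted with multiplicity, so matching the nonzero parts forces the zero-eigenvalue multiplicities to match as well, and thus $\textup{Sp}(G)=\textup{Sp}(H)$ as ordered sequences; the converse direction is immediate, since identical ordered spectra trivially have identical nonzero subsequences. The hypothesis $|E(G)| = |E(H)|$ is used only to ensure that $\psi_G$ and $\psi_H$ live in a common ambient bipartite space $\mathcal{H}_V \otimes \mathcal{H}_E$, so that LU-equivalence is even well-defined. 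I do not expect a serious obstacle here: the argument is essentially bookkeeping, with the only subtlety being the treatment of zero eigenvalues, which is handled cleanly by the vertex-count hypothesis.
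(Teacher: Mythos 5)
Your proof is correct and follows essentially the same route as the paper's: both arguments reduce the claim to the characterization of LU-equivalence by Schmidt coefficients, identify the squared Schmidt coefficients of $\psi_G$ with the nonzero eigenvalues of $L(G) = \mathrm{tr}_E(\psi_G\psi_G^\dagger)$ via Proposition~\ref{prop:LaplacianMarginals}, and use the Schmidt decomposition to pass back and forth between spectra of the Laplacians and local unitaries. Your explicit treatment of the zero-eigenvalue multiplicities via the equal vertex count is a small bookkeeping point that the paper's proof leaves implicit, but it does not change the substance of the argument.
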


\begin{proof}
If $\psi_G$ and $\psi_H$ are LU-equivalent, it follows that there is a unitary
$U : \mathcal{H}_{V} \to\mathcal{H}_{V}$ for which
\begin{equation*}
	L(H) =
	\text{tr}_{E}(\psi_H\psi_H^{\dagger}) = U\,\text{tr}_{E}(\psi_G\psi_G^{\dagger
	})\,U^{\dagger}= U L(G) U^{\dagger}
\end{equation*}
so that $\textup{Sp}(G)=\textup{Sp}(H)$.  Conversely, if
$\textup{Sp}(G)=\textup{Sp}(H)$, we have $\text{tr}_{E}(\psi_H\psi_H
^{\dagger}) = L(H) = U L(G) U^{\dagger}= U\, \text{tr}_{E}(\psi_G\psi_G^{\dagger})
\, U^{\dagger}$ for some unitary $U$.  By considering the Schmidt
decompositions of $\psi_G$ and $\psi_H$, it follows that there exists a unitary
$V: \mathcal{H}_{E} \to\mathcal{H}_{E}$ such that $\psi_H= (U \otimes V)
\psi_G$.  Thus $\psi_G$ and $\psi_H$ are LU-equivalent.  \hfill
\end{proof}

\bigskip

This means that graphs which are Laplacian cospectral correspond to local unitarily equivalent incidence vectors.

Recall that the edge-states $\alpha_{uv}, \alpha_{vw}$ are not perfectly distinguishable from one another through any projective measurement, for any pair of edges $uv, vw \in E$ which coincide, as these vectors are not orthogonal.
In particular, for measurement in the standard basis, there is a probability of $\tfrac{1}{2}$ that any such edge-state will give rise to the common vertex $w$, which is perfectly ambiguous when attempting to distinguish $uv$ from $vw$.
Thus, despite being a uniformly random mixture of the edge-states, the imperfect distinguishability of the edge-states implies that the \emph{von Neumann entropy} of $\rho_G$,
\begin{equation}
  S(\rho_G)	\;=\;	- \mathrm{tr}\Bigl(\rho_G \ln(\rho_G)\Bigr)	\;=\;	-\!\!\!\!\sum_{\lambda \in \mathrm{Spec}(\rho_G)} \!\!\!\!\lambda \ln(\lambda),
\end{equation}
indicates something of the structure of the graph with respect to coincidence of edges.
In particular, as the Laplacian $L(G)$ and the edge-Laplacian $L_E(G)$ have the same spectrum of non-zero eigenvalues, we have
\begin{equation}
	S_{V}(\psi_G) := -\text{tr}(L(G)\ln L(G))=-\text{tr}(L_{E}(G)\ln L_{E}(G)).
\end{equation}
The quantity $S_{V}(\psi_G)$ has been recently studied in several contexts (see, \emph{e.g.}, \cite{han}
for an application in pattern recognition and \cite{ro} for an application in
loop quantum gravity). Another term for $S_A(\mathbf{\psi)}$, for arbitrary pure states $\psi \in \mathcal H_A \otimes \mathcal H_B$, is \emph{entropy of entanglement},
because $S_A(\psi)$ quantifies the amount of entanglement between subsystems with Hilbert space $\mathcal{H}_{A}$ and $\mathcal{H}_{V}$.
Entropy of entanglement is indeed \emph{the} asymptotic entanglement measure for bipartite pure states.
Hence the next fact, giving an interpretation to $S_{V}(\psi_G)$ (or, equivalently, $S(G)$):

\begin{proposition}
The von Neumann entropy of $\rho_G = \tfrac{1}{2|E|} L(G)$ is the amount of
entanglement between the subsystems of $\psi_G$ corresponding to vertices (with Hilbert space $\mathcal{H}_{V}$) and edges (with Hilbert space $\mathcal{H}_{E}$), respectively.
\end{proposition}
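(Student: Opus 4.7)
The plan is to reduce the statement to the standard quantum-information fact that, for a pure bipartite state, the entropy of entanglement coincides with the von Neumann entropy of either reduced density operator. The bulk of the argument is already in place from earlier in the paper; what remains is to normalize correctly and to assemble the pieces.

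First I would pass from the (unnormalized) incidence vector $\psi_G$ to the unit vector $\tilde\psi_G := \psi_G / \sqrt{2|E|}$, which is well-defined because $\|\psi_G\|_2 = \sqrt{\mathrm{tr}(L(G))} = \sqrt{2|E|}$, as was computed immediately after Eq.~\eqref{eqn:incidenceVector}. By the linearity of the partial trace and Proposition~\ref{prop:LaplacianMarginals}, it follows that
\begin{equation*}
	\mathrm{tr}_E\bigl(\tilde\psi_G \tilde\psi_G^\dagger\bigr) \;=\; \tfrac{1}{2|E|}\, L(G) \;=\; \rho_G,
	\qquad
	\mathrm{tr}_V\bigl(\tilde\psi_G \tilde\psi_G^\dagger\bigr) \;=\; \tfrac{1}{2|E|}\, L_E(G),
\end{equation*}
so $\tilde\psi_G$ is a purification of $\rho_G$ on $\mathcal{H}_V \otimes \mathcal{H}_E$, with the edge subsystem playing the role of the environment.

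Next I would invoke the Schmidt decomposition of $\tilde\psi_G$: there exist orthonormal sets $\{\mathbf{e}_i\} \subset \mathcal{H}_V$ and $\{\mathbf{f}_i\} \subset \mathcal{H}_E$ and nonnegative scalars $\alpha_i$ with $\sum_i \alpha_i^2 = 1$ such that $\tilde\psi_G = \sum_i \alpha_i\, \mathbf{e}_i \otimes \mathbf{f}_i$. Tracing out one subsystem gives $\rho_G = \sum_i \alpha_i^2\, \mathbf{e}_i \mathbf{e}_i^\dagger$ and similarly $\tfrac{1}{2|E|} L_E(G) = \sum_i \alpha_i^2\, \mathbf{f}_i \mathbf{f}_i^\dagger$, so the two reduced states share the same nonzero eigenvalues $\{\alpha_i^2\}$. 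By the spectral definition of the von Neumann entropy, $S(\rho_G) = -\sum_i \alpha_i^2 \ln \alpha_i^2 = S\bigl(\tfrac{1}{2|E|}L_E(G)\bigr)$, which is precisely the Schmidt-rank/Schmidt-coefficient characterization of the entropy of entanglement of the pure bipartite state $\tilde\psi_G$ across the cut $\mathcal{H}_V \mid \mathcal{H}_E$.

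Finally I would cite (or briefly recall) the operational content of \emph{entropy of entanglement}: for a bipartite pure state, this quantity is the unique asymptotic measure of entanglement, equal to the rate of Bell-pair distillation and the reciprocal rate of entanglement cost under LOCC (Bennett--Bernstein--Popescu--Schumacher). This identifies $S(\rho_G)$ with the amount of entanglement in $\tilde\psi_G$ between the vertex and edge subsystems, completing the proposition. I do not anticipate a genuine obstacle here; the only subtlety is bookkeeping the normalization factor $1/(2|E|)$ so that the object whose entropy is taken is an actual density matrix, and ensuring $\mathcal{H}_V$ and $\mathcal{H}_E$ are properly interpreted as factors of a tensor-product state space (as discussed after Proposition~\ref{prop:LaplacianMarginals}, possibly by padding with unused basis vectors so that both factors have dimensions that are powers of two if one wishes to view them as collections of qubits).
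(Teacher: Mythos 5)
Your proposal is correct and follows essentially the same route as the paper, which presents this proposition as an immediate consequence of Proposition~\ref{prop:LaplacianMarginals}, the normalization $\|\psi_G\|_2 = \sqrt{2|E|}$, and the standard identification of the entropy of entanglement of a bipartite pure state with the von Neumann entropy of either reduced density operator (via the Schmidt decomposition). You merely make explicit the normalization bookkeeping and the equality of the nonzero spectra of the two marginals, both of which the paper already records in the surrounding discussion.
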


Two graphs $G$ and $H$ are \emph{isomorphic} if there is a bijection
$f:V(G)\longrightarrow V(H)$ such that $\{i,j\}\in E(G)$ if and only if
$\{f(i),f(j)\}\in E(H)$. A \emph{permutation matrix} is a matrix with entries
in the set $\{0,1\}$ and a unique $1$ entry in each row and column. Then, two
graphs $G$ and $H$ are isomorphic if and only if there is a permutation matrix
$P$ such that
\begin{equation}
P \,\rho_G  \,P^{T}= \rho_H  .
\end{equation}
In this case, the matrices $\rho_G$ and $\rho_H$ are said to be \emph{permutation congruent}. Two permutation congruent
matrices have the same eigenvalues and so two isomorphic graphs share the same
spectrum, $\textrm{Sp}(G) = \textrm{Sp}(H)$. It is well known that the converse is does not hold,
\emph{i.e.}\ there are many non-isomorphic graphs which share the same spectrum. Such
pairs of graphs are called \emph{cospectral}. 

\begin{proposition}
Given two graphs with the same number of vertices. Then there exist graphs $G$
and $H$ with $S(G)=S(H)$ but \emph{Sp}$(G)\neq$ \emph{Sp}$(H)$.
\end{proposition}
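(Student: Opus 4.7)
The plan is to establish this existential statement by exhibiting an explicit pair of graphs $G$ and $H$ on the same vertex set satisfying $S(G)=S(H)$ yet $\mathrm{Sp}(G)\neq\mathrm{Sp}(H)$, since no structural obstruction appears to force cospectrality from coentropicity. The entropy $S(G)$ depends only on the multiset of eigenvalues of $\rho_G = L(G)/(2|E(G)|)$, so two graphs are coentropic whenever the rescaled spectra $\{\lambda_i/(2|E(G)|)\}$ and $\{\mu_j/(2|E(H)|)\}$ give the same value of $-\sum x \ln x$, a much weaker demand than equality of the Laplacian spectra themselves.

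First I would observe that if $G$ and $H$ have different edge counts, the normalized spectra can agree as multisets while the unnormalized Laplacian spectra differ by the scaling factor $|E(H)|/|E(G)|$; such a pair is automatically coentropic and non-cospectral. So one natural sub-search is over pairs $(G,H)$ whose Laplacian spectra are scalar multiples of each other. A second, more subtle sub-search is over pairs with the same edge count, where coentropicity would have to arise from a genuine coincidence $-\sum \mu_i \ln \mu_i = -\sum \nu_i \ln \nu_i$ for non-equal multisets $\{\mu_i\}$ and $\{\nu_i\}$.

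Next I would run an exhaustive search over all simple graphs on $n$ vertices, for small $n$, computing $\mathrm{Sp}(G)$ and $S(G)$ for each isomorphism class, and sorting by entropy to look for coincidences that are not explained by Laplacian cospectrality. As indicated in the abstract, no coentropic-but-not-cospectral pair exists for $n\le 7$, and the smallest witness arises at $n=8$; presenting one such pair (together with its Laplacian spectra, displayed to exhibit non-cospectrality, and either a symbolic argument or sufficiently precise numerical evaluation showing the entropies agree) completes the proof.

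The main obstacle is certifying the exact equality $S(G) = S(H)$, because $\ln$ is transcendental and numerical agreement is insufficient. The cleanest way around this is to arrange the example so that the normalized spectra are literally permutations of one another: then entropy equality is automatic from the definition, while cospectrality of the unnormalized Laplacians fails because $|E(G)| \neq |E(H)|$ rescales the eigenvalues. I would therefore aim the computer search specifically at pairs $(G,H)$ with distinct edge counts whose rescaled Laplacian spectra coincide as multisets, and exhibit such a pair on $8$ vertices as the witness required by the proposition. \hfill
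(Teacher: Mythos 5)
There is a genuine gap: you never actually exhibit the witness pair, and the specific class of examples you commit to searching for is one that (as far as the paper's own exhaustive search up to $10$ vertices could determine) is empty. Your ``cleanest way around'' the certification problem is to find $G$ and $H$ with $|E(G)| \neq |E(H)|$ whose \emph{normalized} spectra coincide as multisets, so that entropy equality is automatic. Note that for pairs with \emph{equal} edge counts this route is vacuous (equal normalized spectra would then force equal spectra), so different edge counts are essential to your plan --- but the paper explicitly reports that all coentropic non-cospectral pairs found on $8$, $9$, and $10$ vertices share the same number of edges, and that \emph{no} examples with different edge counts are known. So the search you aim at would come up empty for small $n$, and you have no argument that it ever succeeds.

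The paper's actual proof takes the route you set aside as ``more subtle'': it exhibits two graphs on $8$ vertices with the \emph{same} number of edges ($17$), with genuinely different, non-proportional normalized spectra $[0,3,3,3,3,6,8,8]/34$ and $[0,2,2,4,6,6,6,8]/34$, and the exact equality of entropies is certified symbolically precisely because every nonzero eigenvalue is of the form $2^a 3^b$: both entropies evaluate in closed form to $\ln(34)-[18\ln(3)+54\ln(2)]/34$. This answers your (correct) worry about transcendence of $\ln$ without needing proportional spectra --- one only needs the eigenvalues to factor over a small set of primes so that $-\sum\lambda\ln\lambda$ becomes an explicit $\mathbb{Q}$-linear combination of finitely many logarithms that can be compared exactly. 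To repair your proof you would need either to produce such a concrete same-edge-count pair with a verifiable closed-form entropy, or to prove that a different-edge-count pair with proportional spectra exists, which is an open question here.
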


\bigskip
We call two graphs \emph{coentropic} if their Laplacians have the same von Neumann entropy. It is clear that the number of equivalence classes of coentropic graphs with $n$ vertices and $m$ edges is a lower bound to the number of (pure) bipartite entanglement classes with subsystems of corresponding dimension.
\bigskip

\begin{proof}
It is clear that if Sp$(G)=$ Sp$(H)$ then $S(G)=S(H)$, since $S(G)$ and $S(H)$ are determined by the spectra. However, the following two graphs have spectra $[0,3,3,3,3,6,8,8]/34$ and $[0,2,2,4,6,6,6,8]/34$ and equal von
Neumann entropy $S(G)=S(H)=\ln(34)-[18\ln(3)+54\ln(2)]/34$: \\
\begin{minipage}[b]{0.48\linewidth}%
\centering
\includegraphics[width=0.6\linewidth]{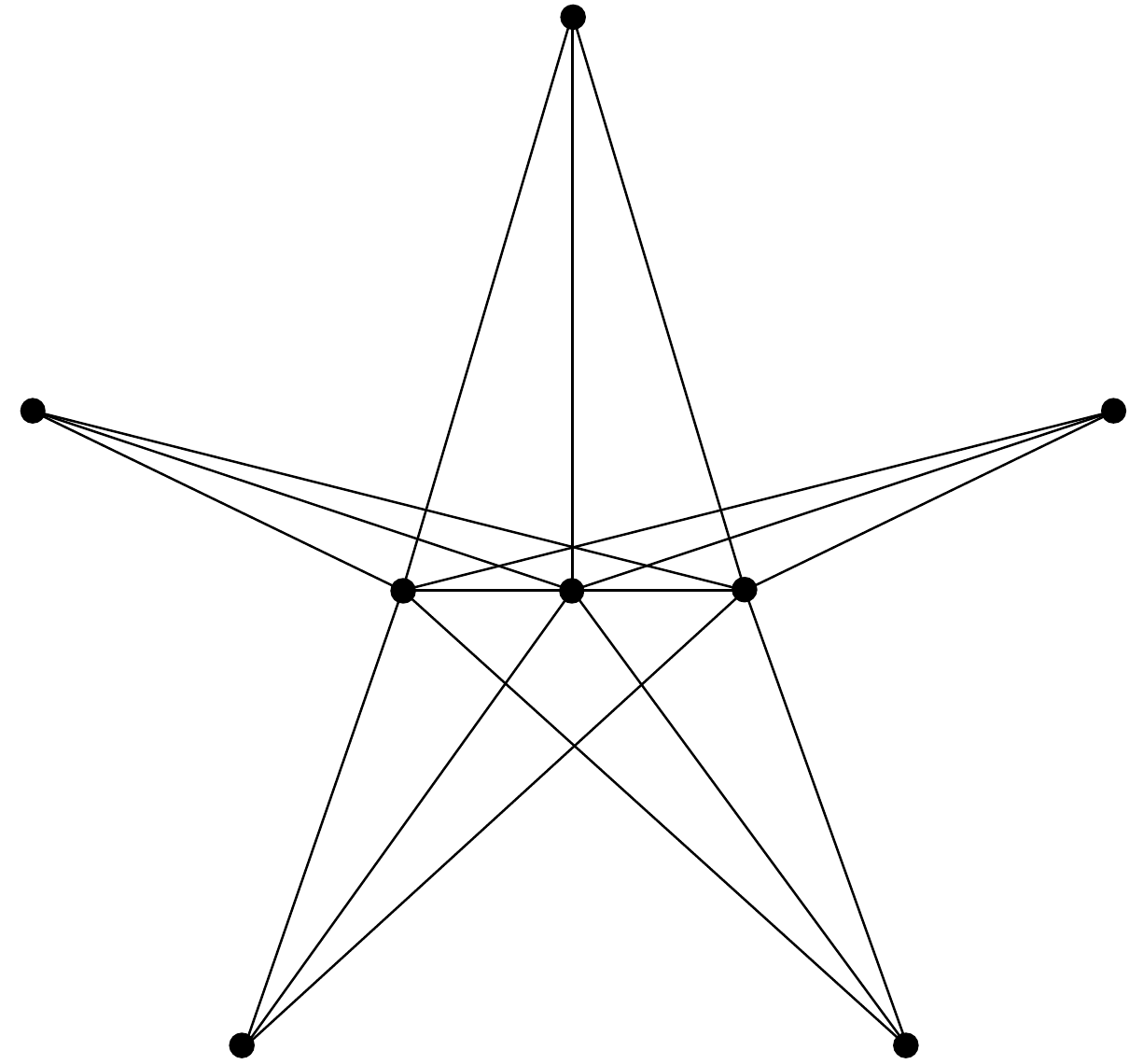}
\end{minipage}
\begin{minipage}[b]{0.48\linewidth}
\centering
\includegraphics[width=0.6\linewidth]{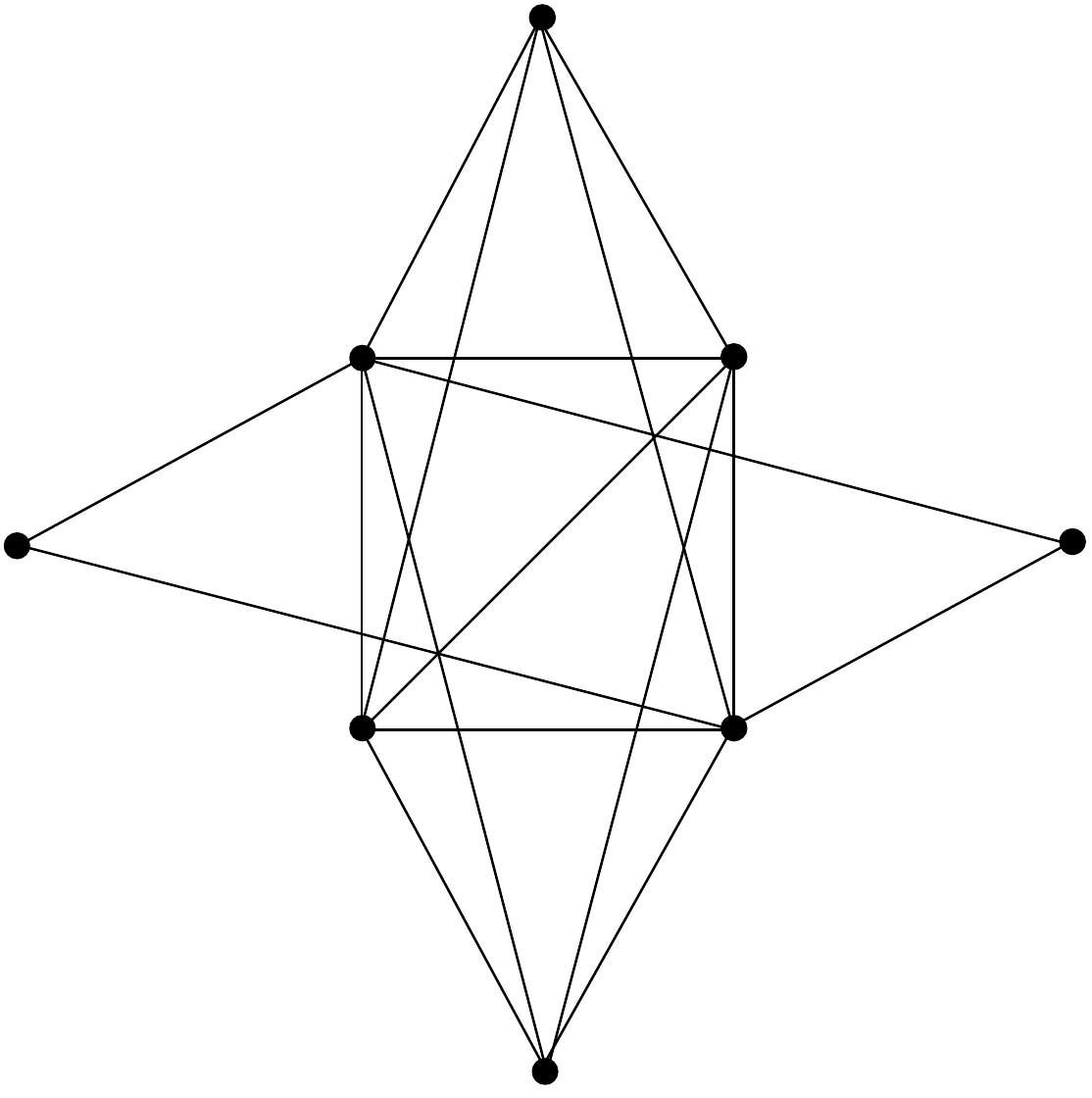}
\end{minipage}
\end{proof}

\bigskip

There are two pairs of non-isomorphic and non-cospectral graphs of size $8$ with the same
entropy. There are 8 such pairs of size 9 (enumerated in the table below) and 76 pairs of size 10.
In all these cases, the pairs share the same number of edges. In this case, it sufficient that the entropy
of the un-normalized Laplacian coincides:
\[
\hat{S}(G) \;=\; -\!\!\!\!\sum_{\lambda\in\text{Sp}(G)}\!\!\!\!\lambda\ln\lambda
\]
where Sp$(G)$ is here the spectrum of the un-normalized Laplacian. No examples are
known for pairs with different numbers of edges.

\begin{table}
\begin{tabular}{l|l}
Graph & Entropy \\
\hline
\hline
\footnotesize \{\{1, 8\}, \{1, 9\}, \{2, 8\}, \{2, 9\}, \{3, 8\}, \{3, 9\}, \{4, 8\}, \{4, 9\},    &  $-\frac{1}{5}\ln(3)$\\
\footnotesize \hspace{10pt} \{5, 8\}, \{5, 9\}, \{6, 8\}, \{6, 9\}, \{7, 8\}, \{7, 9\}, \{8, 9\}\} & \hspace{10pt}$+\frac{3}{5}\ln(2)$\\
\footnotesize \{\{1, 7\}, \{1, 8\}, \{1, 9\}, \{2, 7\}, \{2, 8\}, \{2, 9\}, \{3, 7\}, \{3, 8\},    & \hspace{10pt}$+\ln(5)$\\
\footnotesize \hspace{10pt} \{3, 9\}, \{4, 9\}, \{5, 9\}, \{6, 9\}, \{7, 8\}, \{7, 9\}, \{8, 9\}\} & \\
\hline
\footnotesize \{\{1, 7\}, \{1, 8\}, \{1, 9\}, \{2, 7\}, \{2, 8\}, \{2, 9\}, \{3, 7\}, \{3, 8\}, \{3, 9\}, \{4, 7\}, & $-\frac{15}{19}\ln(3)$ \\
\footnotesize \hspace{10pt}  \{4, 8\}, \{4, 9\}, \{5, 7\}, \{5, 8\}, \{5, 9\}, \{6, 9\}, \{7, 8\}, \{7, 9\}, \{8, 9\}\} &\hspace{10pt}$-\frac{5}{19}\ln(2)$ \\
\footnotesize \{\{1, 6\}, \{1, 7\}, \{1, 8\}, \{1, 9\}, \{2, 6\}, \{2, 7\}, \{2, 8\}, \{2, 9\}, \{3, 8\}, \{3, 9\}, & \hspace{10pt}$+\ln(19)$\\
\footnotesize \hspace{10pt}  \{4, 8\}, \{4, 9\}, \{5, 9\}, \{6, 7\}, \{6, 8\}, \{6, 9\}, \{7, 8\}, \{7, 9\}, \{8, 9\}\} & \\
\hline

\footnotesize \{\{1, 5\}, \{1, 8\}, \{1, 9\}, \{2, 6\}, \{2, 8\}, \{2, 9\}, \{3, 7\}, \{3, 8\}, \{3, 9\}, \{4, 8\}, &\\
\footnotesize \hspace{10pt}  \{4, 9\}, \{5, 8\}, \{5, 9\}, \{6, 8\}, \{6, 9\}, \{7, 8\}, \{7, 9\}, \{8, 9\}\} &   $\ln(3)$\\
\footnotesize \{\{1, 7\}, \{1, 8\}, \{1, 9\}, \{2, 7\}, \{2, 8\}, \{2, 9\}, \{3, 7\}, \{3, 8\}, \{3, 9\}, \{4, 7\}, & \hspace{10pt}$+\frac{7}{6}\ln(2)$\\
\footnotesize \hspace{10pt}  \{4, 8\}, \{4, 9\}, \{5, 7\}, \{5, 8\}, \{5, 9\}, \{6, 9\}, \{7, 9\}, \{8, 9\}\} & \\
\hline

\footnotesize \{\{1, 6\}, \{1, 7\}, \{1, 8\}, \{1, 9\}, \{2, 6\}, \{2, 7\}, \{2, 8\}, \{2, 9\}, \{3, 8\}, \{3, 9\}, &  \\
\footnotesize \hspace{10pt}  \{4, 8\}, \{4, 9\}, \{5, 9\}, \{6, 7\}, \{6, 8\}, \{6, 9\}, \{7, 8\}, \{7, 9\}\} &1.91025843 \\
\footnotesize \{\{1, 7\}, \{1, 8\}, \{1, 9\}, \{2, 7\}, \{2, 8\}, \{2, 9\}, \{3, 7\}, \{3, 8\}, \{3, 9\}, \{4, 7\}, & \\
\footnotesize \hspace{10pt}  \{4, 8\}, \{4, 9\}, \{5, 7\}, \{5, 8\}, \{5, 9\}, \{6, 9\}, \{7, 8\}, \{7, 9\}\} & \\
\hline

\footnotesize \{\{1, 6\}, \{1, 7\}, \{1, 8\}, \{1, 9\}, \{2, 6\}, \{2, 7\}, \{2, 8\}, \{2, 9\}, \{3, 8\}, \{3, 9\}, & $\frac{47}{20}\ln(2)$ \\
\footnotesize \hspace{10pt}  \{4, 8\}, \{4, 9\}, \{5, 8\}, \{5, 9\}, \{6, 7\}, \{6, 8\}, \{6, 9\}, \{7, 8\}, \{7, 9\}, \{8, 9\}\} & \hspace{10pt}$-\frac{6}{5}\ln(3)$\\
\footnotesize \{\{1, 7\}, \{1, 8\}, \{1, 9\}, \{2, 7\}, \{2, 8\}, \{2, 9\}, \{3, 7\}, \{3, 8\}, \{3, 9\}, \{4, 7\}, & \hspace{10pt}$+\ln(5)$\\
\footnotesize \hspace{10pt}  \{4, 8\}, \{4, 9\}, \{5, 7\}, \{5, 8\}, \{5, 9\}, \{6, 8\}, \{6, 9\}, \{7, 8\}, \{7, 9\}, \{8, 9\}\} & \\
\hline

\footnotesize \{\{1, 6\}, \{1, 7\}, \{1, 8\}, \{1, 9\}, \{2, 6\}, \{2, 7\}, \{2, 8\}, \{2, 9\}, \{3, 8\}, \{3, 9\}, & $\frac{6}{19}\ln(2)$ \\
\footnotesize \hspace{10pt}  \{4, 8\}, \{4, 9\}, \{5, 8\}, \{5, 9\}, \{6, 7\}, \{6, 8\}, \{6, 9\}, \{7, 8\}, \{7, 9\}\} & \hspace{10pt}$-\frac{7}{38}\ln(7)$\\
\footnotesize \{\{1, 7\}, \{1, 8\}, \{1, 9\}, \{2, 7\}, \{2, 8\}, \{2, 9\}, \{3, 7\}, \{3, 8\}, \{3, 9\}, \{4, 7\}, & \hspace{10pt}$-\frac{15}{19}\ln(3)$\\
\footnotesize \hspace{10pt}  \{4, 8\}, \{4, 9\}, \{5, 7\}, \{5, 8\}, \{5, 9\}, \{6, 7\}, \{6, 8\}, \{7, 9\}, \{8, 9\}\} & \hspace{10pt}$+\ln(19)$\\
\hline

\footnotesize \{\{1, 4\}, \{1, 5\}, \{1, 7\}, \{1, 8\}, \{1, 9\}, \{2, 6\}, \{2, 9\}, \{3, 6\}, \{3, 9\}, \{4, 5\}, & $\frac{43}{20}\ln(2)$ \\
\footnotesize \hspace{10pt}  \{4, 7\}, \{4, 8\}, \{4, 9\}, \{5, 7\}, \{5, 8\}, \{5, 9\}, \{6, 9\}, \{7, 8\}, \{7, 9\}, \{8, 9\}\} &\hspace{10pt}$-\frac{21}{20}\ln(3)$ \\
\footnotesize \{\{1, 5\}, \{1, 8\}, \{1, 9\}, \{2, 6\}, \{2, 7\}, \{2, 8\}, \{2, 9\}, \{3, 6\}, \{3, 7\}, \{3, 8\}, & \hspace{10pt}$+\ln(5)$\\
\footnotesize \hspace{10pt}  \{3, 9\}, \{4, 8\}, \{4, 9\}, \{5, 8\}, \{5, 9\}, \{6, 8\}, \{6, 9\}, \{7, 8\}, \{7, 9\}, \{8, 9\}\} & \\
\hline

\footnotesize \{\{1, 4\}, \{1, 6\}, \{1, 7\}, \{1, 8\}, \{1, 9\}, \{2, 5\}, \{2, 6\}, \{2, 7\}, \{2, 8\}, \{2, 9\}, &  \\
\footnotesize \hspace{10pt}  \{3, 9\}, \{4, 6\}, \{4, 7\}, \{4, 8\}, \{4, 9\}, \{5, 6\}, \{5, 7\}, \{5, 8\}, \{5, 9\}, \{6, 8\},& \\
\footnotesize \hspace{10pt}  \{6, 9\}, \{7, 8\}, \{7, 9\}, \{8, 9\}\} & $\frac{59}{24}\ln(2)$ \\
\footnotesize \{\{1, 6\}, \{1, 7\}, \{1, 8\}, \{1, 9\}, \{2, 6\}, \{2, 7\}, \{2, 8\}, \{2, 9\}, \{3, 6\}, \{3, 7\}, & \hspace{10pt}$+\frac{1}{4}\ln(3)$\\
\footnotesize \hspace{10pt} \{3, 8\}, \{3, 9\}, \{4, 6\}, \{4, 7\}, \{4, 8\}, \{4, 9\}, \{5, 8\}, \{5, 9\}, \{6, 7\}, \{6, 8\},& \\
\footnotesize \hspace{10pt}  \{6, 9\}, \{7, 8\}, \{7, 9\}, \{8, 9\}\} & \\
\hline

\end{tabular}
\end{table}

\end{document}